\newcommand{\R}{\mathbb{R}}
\newcommand{\C}{\mathbb{C}}
\newcommand{\N}{\mathbb{N}}
\newcommand{\SH}{S_\mathcal{H}}
\newcommand{\BH}{\mathcal{B}(\mathcal{H})}
\newcommand{\DD}{\overline{\mathbb{D}}}
\newcommand{\re}{\operatorname{Re}}
\newcommand{\inner}[1]{\left\langle #1 \right\rangle}
\newcommand{\cl}[1]{\overline{#1}}
\theoremstyle{plain}
\newtheorem{theorem}{Theorem}[section]
\newtheorem{corollary}[theorem]{Corollary}
\newtheorem{lemma}[theorem]{Lemma}
\numberwithin{equation}{section}
\theoremstyle{definition}
\newtheorem{example}[theorem]{Example}
\theoremstyle{remark}
\newtheorem{remark}[theorem]{Remark}
\begin{document}
\title{Numerical ranges encircled by analytic curves}
\author[B. Lins]{Brian Lins}
\date{}
\address{Brian Lins, Hampden-Sydney College}
\email{blins@hsc.edu}

\subjclass[2010]{Primary 47A12}
\keywords{Numerical range; essential numerical range}

\begin{abstract}

Let $D$ be a bounded convex domain in $\C$ with a regular analytic boundary. Suppose that the numerical range $W(A)$ of a bounded linear operator $A$ is contained in $\cl{D}$. If $\cl{W(A)}$ intersects the boundary $\partial D$ at infinitely many points while the essential numerical range $W_\text{ess}(A)$ does not intersect $\partial D$, then $W(A) = \cl{D}$. This generalizes some infinite dimensional analogues of a result of Anderson \cite{BiSpSr18,GaWu06}. 

\end{abstract}

\maketitle

\section{Introduction}

Let $\mathcal{H}$ be a complex Hilbert space and let $\BH$ denote the set of bounded linear operators on $\mathcal{H}$.  Let $\SH$ denote the unit sphere in $\mathcal{H}$. For an operator $A \in \BH$, the \emph{numerical range} of $A$, denoted $W(A)$, is the range of the map $f_A: \SH \rightarrow \C$ defined by
$$f_A(x) := \inner{Ax,x}.$$
According to the Toeplitz-Hausdorff theorem, the numerical range is a convex set. It is also well known that $\cl{W(A)}$ contains the spectrum of $A$, $\sigma(A)$. In 1970 Anderson proved, but did not publish, the following striking theorem.
\begin{theorem} \label{thm:Anderson}
If the numerical range of a matrix $A \in \C^{n \times n}$ is contained in the closed unit disk $\DD$ and $W(A)$ intersects $\partial \mathbb{D}$ at more than $n$ points, then $W(A) = \DD$.  
\end{theorem}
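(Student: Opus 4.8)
The plan is to reduce everything to the one‑parameter family of Hermitian matrices $\rA = \cos\theta\,\re(A) + \sin\theta\,\im(A)$, whose largest eigenvalue is the support function of $W(A)$. Writing $\lambda_{\max}(\theta)$ for the top eigenvalue of $\rA$, we have $\lambda_{\max}(\theta) = \max_{\|x\|=1}\re\inner{e^{i\theta}Ax,x} = \max_{z \in W(A)} \re(e^{i\theta} z)$, and since $W(A) \subseteq \DD$ this is bounded above by the support function of the disk, namely $1$; thus $\lambda_{\max}(\theta)\le 1$ for every $\theta$. If $e^{i\phi}\in W(A)\cap\partial\mathbb{D}$ is a contact point, then taking $\theta=-\phi$ gives $\re(e^{-i\phi}e^{i\phi})=1$, so $\lambda_{\max}(-\phi)\ge 1$ and hence $\lambda_{\max}(-\phi)=1$; equivalently $1$ is an eigenvalue of $\re(e^{-i\phi}A)$. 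Distinct contact points have distinct arguments $\phi\in[0,2\pi)$, hence distinct directions $-\phi$, so the hypothesis furnishes more than $n$ distinct values of $\theta$ at which $1\in\sigma(\rA)$.

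First I would encode this with the single real function $q(\theta):=\det(I-\rA)$. Because $\rA$ has entries affine in $\cos\theta$ and $\sin\theta$, the determinant $q$ is a trigonometric polynomial in $\theta$ of degree at most $n$. Since $I-\rA$ is Hermitian, $q$ is real‑valued, and because every eigenvalue of $\rA$ is at most $\lambda_{\max}(\theta)\le 1$, the factorization $q(\theta)=\prod_{k=1}^{n}\bigl(1-\lambda_k(\theta)\bigr)$ shows $q(\theta)\ge 0$ for all $\theta$. Moreover $q(\theta)=0$ exactly when $1\in\sigma(\rA)$, so by the previous paragraph $q$ has more than $n$ distinct zeros in $[0,2\pi)$.

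The decisive step is a zero‑counting argument. A nonnegative real‑analytic function that vanishes at a point must vanish there to even order, so each of the more than $n$ contact directions is a zero of $q$ of multiplicity at least $2$; altogether $q$ has more than $2n$ zeros counted with multiplicity. On the other hand, a trigonometric polynomial of degree at most $n$ that is not identically zero has at most $2n$ zeros in $[0,2\pi)$ with multiplicity, as one sees by writing $e^{in\theta}q(\theta)=P(e^{i\theta})$ for a polynomial $P$ of degree at most $2n$ and restricting to the unit circle. This forces $q\equiv 0$, i.e. $1\in\sigma(\rA)$ for every $\theta$, whence $\lambda_{\max}(\theta)=1$ identically. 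Thus the support function of $W(A)$ coincides with that of $\DD$, and since $W(A)$ is compact and convex this gives $W(A)=\DD$.

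I expect the main obstacle to be making the multiplicity bookkeeping fully rigorous: verifying that each circle contact point contributes a zero of \emph{even} order rather than a mere zero, that distinct contact points really produce distinct zeros of $q$ within a single period, and that the matching of multiplicities between $q$ and the polynomial $P$ is valid. The nonnegativity of $q$, inherited from $W(A)\subseteq\DD$, is exactly what upgrades ``more than $n$ zeros'' to ``more than $2n$ zeros with multiplicity,'' and this is where the hypothesis $W(A)\subseteq\DD$ — rather than merely $\sigma(A)\subseteq\DD$ — does the essential work.
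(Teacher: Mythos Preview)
Your argument is correct and essentially self-contained. One cosmetic slip: the paper's macro $\rA$ expands to $\re(e^{i\theta}A)=\cos\theta\,\re(A)-\sin\theta\,\im(A)$, whereas you wrote $\cos\theta\,\re(A)+\sin\theta\,\im(A)$; this is just the substitution $\theta\mapsto-\theta$ and does not affect anything.

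Note that the paper does not give its own proof of Theorem~\ref{thm:Anderson}; it only records that Anderson's unpublished argument combined B\'ezout's theorem with Kippenhahn's description of $\partial W(A)$ as (part of) a real algebraic curve determined by $A$. In that framework one intersects two algebraic curves and invokes a degree bound on the number of intersections, counted with tangency, to force a shared component. Your route is genuinely different and more elementary: you bypass projective duality and B\'ezout entirely by working with the single trigonometric polynomial $q(\theta)=\det(I-\rA)$ of degree at most $n$, using the nonnegativity $q\ge 0$ (which is exactly where the hypothesis $W(A)\subseteq\DD$, not merely $\sigma(A)\subseteq\DD$, enters) to double the zero count, and then the $2n$-root bound via $e^{in\theta}q(\theta)=P(e^{i\theta})$. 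What your approach buys is transparency and minimal prerequisites; what the Kippenhahn--B\'ezout approach buys is a template that adapts directly when $\partial\mathbb{D}$ is replaced by another algebraic curve of known degree, giving sharper finite thresholds in that setting.
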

At its heart, this theorem is an algebraic result since both the boundary of the numerical range and the unit circle are algebraic curves \cite{Kippenhahn51}. As recounted in \cite{GaWu06}, Anderson's original proof combined B\'ezout's theorem with Kippenhahn's algebraic description of the boundary curves of the numerical range.  

The following infinite dimensional analogue of Anderson's theorem was proved by Gau and Wu \cite{GaWu06}. 
\begin{theorem} \label{thm:GauWu}
If $A \in \BH$ is a compact operator with $W(A) \subseteq \DD$ and $\cl{W(A)}$ intersects $\partial \mathbb{D}$ at infinitely many points, then $W(A) = \DD$.  
\end{theorem}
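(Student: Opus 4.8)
The plan is to recast the conclusion as an identity of support functions and to extract, from the hypothesis on $W_\text{ess}(A)$, the analyticity that makes this identity propagate. Fix a regular real-analytic parametrization $\gamma\colon\R/L\Z\to\partial D$ of the boundary, let $n(t)$ be the outward unit normal at $\gamma(t)$ (real-analytic in $t$, as $\gamma'$ never vanishes), and set $B_t:=\re(\overline{n(t)}A)$, a real-analytic family of bounded self-adjoint operators. Let $h(t):=\re(\overline{n(t)}\gamma(t))$ be the support function of $D$ in the direction $n(t)$, and $g(t):=\sup\re\big(\overline{n(t)}W(A)\big)=\max\sigma(B_t)$ that of $\cl{W(A)}$. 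Convexity and $W(A)\subseteq\cl D$ give $g\le h$; a standard supporting-line argument, using that the closed analytic curve $\partial D$ contains no line segment, identifies the contact set $Z:=\{t:\gamma(t)\in\cl{W(A)}\}$ with $\{t:g(t)=h(t)\}$, which is closed ($g$ and $h$ are continuous) and infinite (since $\cl{W(A)}\cap\partial D$ is infinite and $\gamma$ is injective). I will prove $Z=\R/L\Z$; then $g\equiv h$, so $\cl{W(A)}=\cl D$. To upgrade this to $W(A)=\cl D$: convexity gives $D=\operatorname{int}\cl{W(A)}\subseteq W(A)$, and for $q=\gamma(t)\in\partial D$ the analysis below shows $h(t)=g(t)=\max\sigma(B_t)$ is an isolated eigenvalue with nonzero finite-dimensional eigenspace $V$, for which $\cl{W(A)}\cap\{z:\re(\overline{n(t)}z)=h(t)\}=\{\inner{Au,u}:u\in V,\ \|u\|=1\}\subseteq W(A)$; since $q$ lies on that supporting line and in $\cl{W(A)}$, it lies in $W(A)$.

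Here is where $W_\text{ess}(A)$ enters. Since $W_\text{ess}(A)$ is a compact subset of the open convex set $D$, there is $\epsilon>0$ with $\max\re(\overline{n(t)}W_\text{ess}(A))\le h(t)-\epsilon$ for all $t$; and a standard computation (e.g.\ via the Calkin algebra) gives $\max\sigma_\text{ess}(B_t)=\max\re(\overline{n(t)}W_\text{ess}(A))$. Hence whenever $g(t)>h(t)-\epsilon$ --- and in particular for every $t\in Z$ --- the value $g(t)=\max\sigma(B_t)$ lies outside $\sigma_\text{ess}(B_t)$, so it is an isolated eigenvalue of $B_t$ of finite multiplicity. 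Fix $t_0\in Z$ and let $\lambda_0:=g(t_0)=h(t_0)$ have multiplicity $m$. By analyticity of $t\mapsto B_t$ and Kato's perturbation theory, for $t$ near $t_0$ the Riesz projection $P_t$ onto the part of $\sigma(B_t)$ inside a small circle about $\lambda_0$ is real-analytic of constant rank $m$, captures the top of $\sigma(B_t)$ after shrinking the neighborhood, and has analytically varying range $\mathcal V_t$; an analytic frame for $\mathcal V_t$ then yields a real-analytic family of Hermitian $m\times m$ matrices $M(t)$, the matrix of $B_t|_{\mathcal V_t}$, with $\max\sigma(M(t))=g(t)$. (In finite dimensions one simply takes $\mathcal V_t=\mathcal H$ and $M(t)=B_t$; the role of the $W_\text{ess}$ hypothesis is to supply such a finite-dimensional surrogate in general.)

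The crux is then a local dichotomy. Put $F(t):=\det\big(h(t)I_m-M(t)\big)$, real-analytic near $t_0$. Since $h(t)\ge g(t)=\max\sigma(M(t))$, the scalar $h(t)$ is a root of the characteristic polynomial of $M(t)$ exactly when $h(t)=g(t)$; hence $\{F=0\}=Z$ near $t_0$. By the identity theorem for real-analytic functions on an interval, near $t_0$ either $F\equiv0$ --- so a whole neighborhood of $t_0$ lies in $Z$ and $t_0\in\operatorname{int}Z$ --- or $t_0$ is an isolated point of $Z$. Thus every point of $Z$ is interior to $Z$ or isolated in $Z$. Since $Z$ is infinite and closed it has an accumulation point, which cannot be isolated and so lies in $\operatorname{int}Z$; and $\operatorname{int}Z$ is closed, because any point of its closure is a non-isolated point of $Z$ and hence interior. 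Therefore $\operatorname{int}Z$ is a nonempty clopen subset of the connected circle $\R/L\Z$, so $Z=\R/L\Z$, which completes the argument. (Taking $D=\DD$ and $A$ compact, for which $W_\text{ess}(A)=\{0\}$, recovers Theorem~\ref{thm:GauWu}.)

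I expect the real obstacle to be the content of the previous two paragraphs. In infinite dimensions $\partial W(A)$ need not be an analytic --- let alone algebraic --- curve, so Kippenhahn's description, the engine of Anderson's B\'ezout argument, is not available; the hypothesis $W_\text{ess}(A)\cap\partial D=\emptyset$ is precisely what restores enough analyticity, by forcing $\max\sigma(B_t)$ to be a genuine isolated eigenvalue of finite multiplicity along the contact set, so that Kato--Rellich theory applies. The familiar residual nuisance --- that the top eigenvalue of an analytic self-adjoint family is only piecewise analytic, since multiplicities can jump and eigenvalue branches can cross --- is exactly why I would work with the determinant $F$ of the full top compression rather than with $g$ itself: $F$ is honestly real-analytic and its zero set is exactly $Z$. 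One must also check that ``regular analytic boundary'' truly yields the real-analyticity of $t\mapsto n(t)$, hence of $B_t$, and (via the no-segment remark) that each supporting line of $D$ meets $\cl D$ in a single point --- both underlie the support-function bookkeeping --- and that the spectral identities relating $\sigma_\text{ess}(B_t)$ to $W_\text{ess}(A)$, together with the contact-set and boundary descriptions, go through with the gap $\epsilon$ in hand.
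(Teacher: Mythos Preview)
Your argument is correct, and it reaches the Gau--Wu theorem by first proving (essentially) Corollary~\ref{cor:inf} and then specializing to $D=\mathbb{D}$ and $W_\text{ess}(A)=\{0\}$. But the route is genuinely different from the paper's. The paper invokes the full Kato--Rellich package to obtain analytic eigenvalue \emph{and eigenvector} branches $\lambda_j(\theta),\varphi_j(\theta)$ of $\re(e^{-i\theta}A)$, forms the analytic curves $\zeta_j(\theta)=\inner{A\varphi_j(\theta),\varphi_j(\theta)}\subset W(A)$, cites Narcowich's structure theorems to identify $\partial W(A)$ near the accumulation point with some $\zeta_j$, and then applies the identity theorem to the pair of planar curves $\zeta_j$ and $\Gamma$ before extending by analytic continuation. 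You instead stay with support functions: only the analyticity of the Riesz projection is needed to produce the compressed matrix $M(t)$, and the single scalar analytic function $F(t)=\det(h(t)I-M(t))$ carries the whole argument. Your determinant device neatly avoids the eigenvalue-branch--crossing nuisance and the appeal to Narcowich's description of $\partial W(A)$, and the clopen/connectedness step is tidier than the paper's ``repeat the argument and analytically continue.'' On the other hand, the paper's curve-based approach is more geometric---it exhibits the boundary arcs of $W(A)$ explicitly---and, as formulated in Theorem~\ref{thm:main}, handles a compact analytic arc $\Gamma\subset\partial D$ directly; your connectedness argument, as written, uses that $\partial D$ is an entire closed curve (though it adapts to arcs with only cosmetic changes). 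One small imprecision: the displayed equality $\cl{W(A)}\cap\{\re(\overline{n(t)}z)=h(t)\}=\{\inner{Au,u}:u\in V,\|u\|=1\}$ is really an equality for $W(A)$, not $\cl{W(A)}$; but since the support line meets $\cl{D}$ only at $\gamma(t)$, the conclusion $\gamma(t)\in W(A)$ follows anyway.
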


This theorem lacks the algebraic character of Theorem \ref{thm:Anderson} since the boundary of $W(A)$ need not be an algebraic curve when $\mathcal{H}$ is infinite dimensional.  Instead, the proof of Theorem \ref{thm:GauWu} relies on an observation, first made by Narcowich \cite{Narcowich80}, that the boundary of the numerical range of a compact operator consists of a countable number of regular analytic curves. Recall that a curve is \emph{analytic} if it is the range of an analytic function $\gamma$ from an interval $I \subseteq \R$ into $\C$. It is \emph{regular} if $\gamma'(t) \ne 0$ for all $t \in I$. 

Recently, Theorem \ref{thm:GauWu} has been generalized to compact plus normal operators \cite{BiSpSr18}. For non-compact operators, the boundary of the numerical range need not be piecewise analytic. In fact, if $N \in \BH$ is normal, then $\cl{W(N)}$ is the convex hull of $\sigma(N)$ \cite{Berberian64}. Therefore one can choose $N$ such that $\cl{W(N)}$ is any compact convex subset of $\C$. To avoid pathological cases, Birbonshi et al. make use of the essential spectrum in \cite[Theorems 3 and 4]{BiSpSr18}. In what follows, we will use the essential numerical range for the same purpose.  By utilizing the essential numerical range rather than the essential spectrum, our results apply to more general bounded linear operators.  
Because our results are analytic rather than algebraic in nature, we will also see that any bounded convex domain with a regular analytic boundary can take the place of the unit disk.  

The \emph{essential numerical range} of $A$ is 
$$W_\text{ess}(A) := \bigcap \cl{W(A+K)},$$
where the intersection is taken over all compact operators $K \in \BH$. The essential numerical range was introduced in \cite{StWi68}. Several equivalent characterizations of $W_\text{ess}(A)$ were given in \cite{FiStWi72}, one of which is the following. 
$$W_\text{ess}(A) = \{\lambda \in \C : \exists \, x_k \in \SH \text{ with } x_k \xrightarrow{w} 0 \text{ and } \inner{Ax_k,x_k} \rightarrow \lambda \}.$$
From the definition, it is clear that $W_\text{ess}(A)$ is a nonempty, compact, convex subset of $\C$, unless $\mathcal{H}$ is finite dimensional in which case $W_\text{ess}(A) = \varnothing$. If $A$ is a compact operator on an infinite dimensional Hilbert space, then $W_\text{ess}(A) = \{0\}$. The essential numerical range $W_\text{ess}(A)$ contains the \emph{essential spectrum} $\sigma_\text{ess}(A)$, which is the set of all $\lambda \in \C$ such that $A-\lambda I$ is not Fredholm \cite{StWi68} (see also \cite{FiStWi72} for a discussion of alternative notions of essential spectra). When $A$ is self-adjoint, $\sigma(A)\backslash \sigma_\text{ess}(A) = \sigma_\text{disc}(A)$ where $\sigma_\text{disc}(A)$ is the \emph{discrete spectrum} of $A$ consisting of all isolated eigenvalues of $A$ with finite multiplicity \cite[Section VII.3]{ReedSimon}.   

Our main result is the following theorem which generalizes \cite[Theorem 3]{BiSpSr18} (see Remark \ref{rem:gen}).


\begin{theorem} \label{thm:main}
Let $A \in \BH$ and suppose that $W(A) \subseteq \cl{D}$ where $D$ is a convex set. Let $\Gamma$ be a compact regular analytic curve contained in $\partial D$ such that none of the tangent lines to $\Gamma$ intersect $W_\text{ess}(A)$ and $\Gamma$ is not a line segment. If $\cl{W(A)}$ intersects $\Gamma$ at infinitely many points, then $\Gamma \subset W(A)$.
\end{theorem}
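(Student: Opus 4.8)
The plan is to control, as $t$ ranges over $\Gamma$, the largest eigenvalue of the self-adjoint operator $B(t):=\re(e^{-i\theta(t)}A)$, where $\gamma\colon I\to\C$ is an analytic regular parametrization of $\Gamma$ on a compact interval $I$ and $e^{i\theta(t)}$ is the outward unit normal of $D$ along $\Gamma$ at $\gamma(t)$. Since $\Gamma$ coincides with $\partial D$ near each of its interior points, the tangent line $\ell_t$ to $\Gamma$ at $\gamma(t)$ is a supporting line of $D$, $\theta$ is real-analytic (as $\Gamma$ is a regular analytic curve), and with $c(t):=\re(e^{-i\theta(t)}\gamma(t))$ we have $\ell_t=\{z:\re(e^{-i\theta(t)}z)=c(t)\}$ and $\cl{W(A)}\subseteq\cl D\subseteq\{z:\re(e^{-i\theta(t)}z)\le c(t)\}$. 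Write $\mu(t):=\max\sigma(B(t))$ and $\mu_\text{ess}(t):=\max\sigma_\text{ess}(B(t))$. Then $\mu(t)\le c(t)$ for all $t$, while the hypothesis that $\ell_t$ misses the compact convex set $W_\text{ess}(A)\subseteq\cl D$ gives $\mu_\text{ess}(t)<c(t)$ for all $t$; here one uses the standard identities $W_\text{ess}(\re(e^{-i\theta}A))=\re(e^{-i\theta}W_\text{ess}(A))$ and $\max\sigma_\text{ess}(H)=\max W_\text{ess}(H)$ for self-adjoint $H$. Note that $\mu$ and $\mu_\text{ess}$ are continuous, $c$ is real-analytic on $I$, and $t\mapsto B(t)$ is a real-analytic $\BH$-valued function.

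The substitute for the compactness of $A$ in Theorem~\ref{thm:GauWu} is a localization lemma: \emph{if $w\in\partial\cl{W(A)}$ and some supporting line of $\cl{W(A)}$ at $w$ is disjoint from $W_\text{ess}(A)$, then $w\in W(A)$.} For the direction $\phi$ of that line one then has $\max\sigma(\re(e^{-i\phi}A))>\max\sigma_\text{ess}(\re(e^{-i\phi}A))$, so $\max\sigma(\re(e^{-i\phi}A))$ is an isolated eigenvalue of finite multiplicity; decomposing a sequence $x_n\in\SH$ with $\inner{Ax_n,x_n}\to w$ along the corresponding finite-dimensional eigenspace and its complement, a spectral-gap estimate forces the complementary components to vanish, so a subsequence of $x_n$ converges to a unit vector $y$ with $\inner{Ay,y}=w$. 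Applied at $w=\gamma(t)$, whose supporting line for $\cl{W(A)}$ is exactly $\ell_t$, this shows $\cl{W(A)}\cap\Gamma=W(A)\cap\Gamma$; hence $T:=\{t\in I:\gamma(t)\in W(A)\}$ is infinite (by the intersection hypothesis) and closed (a limit of points of $W(A)\cap\Gamma$ lies in $\cl{W(A)}\cap\Gamma=W(A)\cap\Gamma$). Also $\gamma(t)\in\cl{W(A)}$ forces $\mu(t)=c(t)$, so $T\subseteq Z:=\{t\in I:\mu(t)=c(t)\}$, and $Z$ is closed.

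Next I would show $Z=I$ by analytic continuation, via the engine: \emph{if $t_0\in Z$ is an accumulation point of $Z$, then $Z$ contains a neighbourhood of $t_0$.} Indeed $\mu(t_0)=c(t_0)=\max\sigma(B(t_0))>\mu_\text{ess}(t_0)$, so one may fix $c_0$ below $\mu(t_0)$ but above the rest of $\sigma(B(t_0))$; for $t$ near $t_0$ the spectral projection of $B(t)$ onto $(c_0,\infty)$ has a fixed finite rank $N$ and is real-analytic, so by Rellich's theorem the $N$ eigenvalues of $B(t)$ in $(c_0,\infty)$, whose maximum is $\mu(t)$, are real-analytic branches $\lambda_1(t),\dots,\lambda_N(t)$, each $\le c(t)$. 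Since $\mu=\max_j\lambda_j$ equals the analytic function $c$ on a set accumulating at $t_0$, some branch $\lambda_{j_0}$ coincides with $c$ identically near $t_0$, whence $\mu=c$ near $t_0$. As $T\subseteq Z$ is infinite, $Z$ has an accumulation point, so by the engine the interior of $Z$ in $I$ is nonempty; the same engine shows this interior is also closed in $I$, hence equals the connected set $I$. Therefore $\mu(t)=c(t)$, i.e.\ $\ell_t\cap\cl{W(A)}\ne\varnothing$, for every $t\in I$.

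Finally I would use the shape of $\Gamma$. Let $R:=\bigcap_{t\in I}\{z:\re(e^{-i\theta(t)}z)\le c(t)\}$; each half-plane contains $\cl D$, so $\cl{W(A)}\subseteq R$. A Taylor expansion along $\Gamma$ at an interior point $t_0$ of nonzero curvature shows $R\cap\ell_{t_0}=\{\gamma(t_0)\}$: parametrizing $\ell_{t_0}$ as $\gamma(t_0)+sv_0$ with $v_0$ the unit tangent, the constraint $\re(e^{-i\theta(t)}(\gamma(t_0)+sv_0))\le c(t)$ becomes $s\,\re(e^{-i\theta(t)}v_0)\le c(t)-\re(e^{-i\theta(t)}\gamma(t_0))$, where the left factor vanishes to first order and the right side vanishes to exactly second order with positive leading coefficient as $t\to t_0$, forcing $s=0$ once both signs of $t-t_0$ are available. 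The curvature of the analytic curve $\Gamma$ is real-analytic and, since $\Gamma$ is not a line segment, not identically zero, so it has only finitely many zeros $F_0$ in $I$. For every $t_0$ in the dense set $I^\circ\setminus F_0$ we then get $\cl{W(A)}\cap\ell_{t_0}\subseteq R\cap\ell_{t_0}=\{\gamma(t_0)\}$, which is nonempty because $t_0\in Z$, so $\gamma(t_0)\in\cl{W(A)}$ and thus $\gamma(t_0)\in W(A)$ by the localization lemma. Hence $T$ contains $I^\circ\setminus F_0$ and, being closed, $T=I$, i.e.\ $\Gamma\subseteq W(A)$. The main obstacle is the third step, promoting "$\mu=c$ on the given infinite point set" to "$\mu\equiv c$ on all of $I$": the supporting values of $\cl{W(A)}$ and of $\cl D$ a priori agree only along the given sequence of directions, and matching them everywhere needs both the finite-multiplicity input (so that $\mu$ is locally a maximum of analytic eigenvalue branches) and the $W_\text{ess}(A)$ hypothesis (so that this finite-multiplicity situation persists along $\Gamma$). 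The exposedness fact $R\cap\ell_{t_0}=\{\gamma(t_0)\}$ is where "analytic and not a line segment" plays the role that algebraicity of the circle played in Anderson's original argument.
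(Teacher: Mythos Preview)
Your argument is correct and reaches the same conclusion, but by a genuinely different route from the paper. The paper works with the analytic \emph{eigenvector} branches $\varphi_j(\theta)$ of $\re(e^{-i\theta}A)$ and the associated curves $\zeta_j(\theta)=\inner{A\varphi_j(\theta),\varphi_j(\theta)}$ lying inside $W(A)$; it then invokes Narcowich's structural results on $\partial W(A)$ to identify an arc of $\partial W(A)$ near the accumulation point with one of the $\zeta_j$, and matches that arc to $\Gamma$ by the identity principle in the common imaginary-coordinate parameter. You instead work entirely with the \emph{eigenvalue} (support-function) side: you compare $\mu(t)=\max\sigma(B(t))$ to $c(t)$, use Rellich only to write $\mu$ locally as the max of finitely many analytic branches, and run the identity principle on $\mu=c$. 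You then recover the actual boundary points $\gamma(t)\in\cl{W(A)}$ by your exposedness computation $R\cap\ell_{t_0}=\{\gamma(t_0)\}$ at points of nonzero curvature, and upgrade $\cl{W(A)}$ to $W(A)$ via your localization lemma. What this buys you: you never cite Narcowich's description of $\partial W(A)$, you never need the eigenvector branches, and the role of ``not a line segment'' is made completely explicit through the curvature/exposedness step; the localization lemma also makes transparent where the hypothesis on $W_\text{ess}(A)$ is doing the work of turning closure points into genuine numerical-range values. The paper's route is shorter once one is willing to quote \cite{Narcowich80}, and it yields the somewhat stronger picture that $\Gamma$ is literally traced out by an eigenvector curve $\zeta_j$, not merely contained in $W(A)$.
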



An immediate corollary is the following infinite dimensional analogue of Anderson's theorem that generalizes both Theorem \ref{thm:GauWu} and \cite[Theorem 4]{BiSpSr18}. 

\begin{corollary} \label{cor:inf}
Let $D$ be a bounded open convex subset of $\C$ with a regular analytic boundary curve. Suppose that $W(A) \subseteq \cl{D}$ and $W_\text{ess}(A) \subset D$ for some $A \in \BH$. If $\cl{W(A)}$ intersects $\partial D$ at infinitely many points, then $W(A) = \cl{D}$. 
\end{corollary}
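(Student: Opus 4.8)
**The plan is to deduce Corollary \ref{cor:inf} from Theorem \ref{thm:main} by showing that the hypotheses of the corollary let us apply the theorem with $\Gamma = \partial D$, and then upgrade the conclusion $\partial D \subset W(A)$ to $W(A) = \cl{D}$ using convexity.**

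First I would verify the hypotheses of Theorem \ref{thm:main}. Since $D$ is a bounded open convex set, $W(A) \subseteq \cl{D}$ is given and $D$ is convex, so the first requirement holds. The boundary $\partial D$ is by assumption a regular analytic curve; since $\cl{D}$ is compact, $\partial D$ is a compact regular analytic curve, so we may take $\Gamma = \partial D$. Because $D$ is a \emph{bounded} convex domain (two-dimensional, with nonempty interior), $\partial D$ is not a line segment. The one point requiring a short argument is that no tangent line to $\partial D$ meets $W_\text{ess}(A)$: by convexity, every tangent line $L$ to $\partial D$ is a supporting line, so $D$ lies entirely in one of the two open half-planes determined by $L$ — more precisely, $\cl{D} \cap L$ is contained in $\partial D$ and $D$ lies strictly on one side. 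Since $W_\text{ess}(A) \subset D$, the open set $D$ does not meet $L$; and since $W_\text{ess}(A)$ is a subset of the open set $D$, it is disjoint from $\partial D$ as well, hence disjoint from $L$. (Here I am using that a tangent line to a convex domain touches $\cl D$ only along $\partial D$.) Thus $L \cap W_\text{ess}(A) = \varnothing$ for every tangent line $L$, and the finitely-many-intersections hypothesis of the corollary is exactly the infinitely-many-intersections hypothesis of the theorem.

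Applying Theorem \ref{thm:main} now yields $\partial D = \Gamma \subset W(A)$. To finish, I would use convexity of $W(A)$ (Toeplitz--Hausdorff): the convex hull of $\partial D$ is $\cl{D}$, since $D$ is a bounded open convex set and its boundary's convex hull recovers its closure. Therefore $\cl{D} = \conv(\partial D) \subseteq \conv(W(A)) = W(A) \subseteq \cl{D}$, giving $W(A) = \cl{D}$, as desired.

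I do not expect a serious obstacle here, since all the work is in Theorem \ref{thm:main}; the only mild subtlety is the supporting-line argument showing tangent lines avoid $W_\text{ess}(A)$, which reduces to the standard fact that a supporting line of an open convex set is disjoint from that set. If one wanted to be careful, one should note that for a domain with regular analytic boundary the tangent line at each boundary point is indeed a supporting line (this follows from convexity together with smoothness of $\partial D$), so there is no ambiguity in the phrase ``tangent line.''
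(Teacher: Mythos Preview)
Your argument is correct and matches the paper's intent: the paper presents Corollary~\ref{cor:inf} as an immediate consequence of Theorem~\ref{thm:main} without writing out a proof, and your deduction---take $\Gamma = \partial D$, verify via the supporting-line property that tangent lines miss $W_\text{ess}(A) \subset D$, then invoke Toeplitz--Hausdorff to pass from $\partial D \subset W(A)$ to $W(A) = \cl{D}$---is exactly the intended route. (Minor slip: you wrote ``finitely-many-intersections hypothesis of the corollary'' where you meant ``infinitely-many.'')
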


\section{Proof of main result}
Before proving Theorem \ref{thm:main}, we collect some known facts related to numerical ranges. 
For any operator $B \in \BH$, recall that the \emph{real part of} $B$ is the self-adjoint operator $\re(B) = \frac{1}{2}(B+B^*)$. The following result is well known. See, for example, \cite[Theorem 3.1]{Narcowich80} and its proof.  

\begin{lemma} \label{lem:support}
Let $A \in \BH$ and let $\mu(\theta)$ denote the maximum of the spectrum of $\re(e^{-i \theta}A)$ for all $\theta \in \R$.  Then,
$$\cl{W(A)} = \bigcap_{0 \le \theta < 2\pi} \{z \in \C \, : \, \re(e^{-i \theta}z) \le \mu(\theta) \}.$$
Furthermore, any $z \in \partial W(A)$ lies on a support line $L_\theta$ where
$$L_\theta := \{ z \in \C \, : \, \re(e^{-i\theta} z) = \mu(\theta) \}$$
for some $\theta \in [0,2\pi)$.  
\end{lemma}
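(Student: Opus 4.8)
The plan is to recognize $\mu(\theta)$ as the value of the support function of the closed convex set $\cl{W(A)}$ in the direction $e^{i\theta}$, after which both assertions reduce to standard facts about convex sets. First I would invoke the Toeplitz--Hausdorff theorem, so that $\cl{W(A)}$ is a closed convex subset of $\C$ and therefore coincides with the intersection of all the closed half-planes that contain it. Every such half-plane can be written as $\{z \in \C : \re(e^{-i\theta}z) \le s\}$ for some $\theta$ and some real $s$, and for a fixed direction $\theta$ the smallest admissible $s$ is precisely $\sup_{z \in \cl{W(A)}} \re(e^{-i\theta}z)$. Thus the entire statement rests on showing that this supremum equals $\mu(\theta)$.

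The computation hinges on the identity $\re\inner{Bx,x} = \inner{\re(B)x,x}$, valid for any $B \in \BH$ and any $x \in \SH$, which follows directly from the definition $\re(B) = \tfrac12(B+B^*)$ together with $\inner{B^*x,x} = \cl{\inner{Bx,x}}$. Applying it to $B = e^{-i\theta}A$ and taking the supremum over $x \in \SH$ gives
$$\sup_{z \in W(A)} \re(e^{-i\theta}z) = \sup_{x \in \SH} \inner{\re(e^{-i\theta}A)\,x,x}.$$
Writing $T := \re(e^{-i\theta}A)$, which is bounded and self-adjoint, the right-hand side is the supremum of the numerical range of $T$; since $\cl{W(T)} = [\min\sigma(T),\max\sigma(T)]$ by the spectral theorem, this supremum equals $\max\sigma(T) = \mu(\theta)$. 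As this holds for every $\theta$, the intersection formula for $\cl{W(A)}$ follows.

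For the final assertion, since $\cl{W(A)}$ is closed and convex, the supporting hyperplane theorem supplies, for any boundary point $z$, at least one support line through $z$. Such a line has an outward normal direction $e^{i\theta}$ and satisfies $\re(e^{-i\theta}z) = \sup_{w \in \cl{W(A)}}\re(e^{-i\theta}w) = \mu(\theta)$, so it is exactly $L_\theta$. The step requiring the most care is the identification $\sup_{x \in \SH}\inner{Tx,x} = \max\sigma(T)$ for bounded self-adjoint $T$: one should note that $\sigma(T)$ is a compact subset of $\R$ so the maximum exists, and that although the supremum over $\SH$ need not be attained, it still equals the top of the spectrum. The only other point to watch is the degenerate case in which $\cl{W(A)}$ is a single point or a line segment and hence has empty interior, but the supporting line statement remains valid there as well.
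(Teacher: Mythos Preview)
The paper does not actually prove this lemma; it records it as well known and refers the reader to \cite[Theorem 3.1]{Narcowich80} and its proof. Your argument is correct and is the standard one: identify $\mu(\theta)$ with the value of the support function of $\cl{W(A)}$ in the direction $e^{i\theta}$ via the identity $\re\inner{Bx,x}=\inner{\re(B)x,x}$ and the fact that $\sup W(T)=\max\sigma(T)$ for bounded self-adjoint $T$, and then invoke the half-space representation of a closed convex set together with the supporting hyperplane theorem.
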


The following lemma uses the same notation as Lemma \ref{lem:support}. We don't claim that this is a new result, but it doesn't appear to be widely known, so we include a proof (see also \cite[Lemma 4.1]{LiSp18}).

\begin{lemma} \label{lem:ess}
There is a point $z \in W_\text{ess}(A) \cap L_\theta$ if and only if $\mu(\theta)$ is in the essential spectrum of $\re(e^{-i \theta}A)$.  
\end{lemma}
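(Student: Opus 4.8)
The plan is to reduce everything to the self-adjoint operator $S := \re(e^{-i\theta}A)$ and exploit the correspondence between the essential numerical range of $A$ in the direction $\theta$ and the essential spectrum of $S$. Write $\lambda = e^{-i\theta}z$, so that $\re\lambda = \mu(\theta)$ is exactly the condition that $z \in L_\theta$. Since $W_\text{ess}(e^{-i\theta}A) = e^{-i\theta}W_\text{ess}(A)$, it suffices to prove the claim with $\theta = 0$: there is a point $\lambda \in W_\text{ess}(A) \cap L_0$ (i.e.\ with $\re\lambda = \mu(0) = \max\sigma(S)$) if and only if $\mu(0) \in \sigma_\text{ess}(S)$.

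First I would establish the general fact that $\re(W_\text{ess}(A)) = W_\text{ess}(\re A) = W_\text{ess}(S)$; more precisely, that $\{\re\lambda : \lambda \in W_\text{ess}(A)\}$ is the projection of $W_\text{ess}(A)$ onto the real axis and equals $W_\text{ess}(S)$, which for the self-adjoint operator $S$ is the interval $[\min\sigma_\text{ess}(S), \max\sigma_\text{ess}(S)]$. This follows from the sequential characterization of the essential numerical range quoted in the introduction: if $x_k \xrightarrow{w} 0$ with $\inner{Ax_k,x_k} \to \lambda$, then $\inner{Sx_k,x_k} = \re\inner{Ax_k,x_k} \to \re\lambda$, so $\re(W_\text{ess}(A)) \subseteq W_\text{ess}(S)$; conversely, given any $t \in W_\text{ess}(S)$ pick $x_k \xrightarrow{w} 0$ with $\inner{Sx_k,x_k} \to t$, pass to a subsequence so that $\inner{Ax_k,x_k}$ converges to some $\lambda \in W_\text{ess}(A)$ (using that $W(A)$ is bounded), and note $\re\lambda = t$.

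Next I would identify $\max W_\text{ess}(S)$ with $\max\sigma_\text{ess}(S)$. This is the standard self-adjoint fact: since $S$ is self-adjoint, $W_\text{ess}(S)$ is the convex hull of $\sigma_\text{ess}(S)$, so its right endpoint is $\max\sigma_\text{ess}(S)$. (One direction is immediate because $\sigma_\text{ess}(S) \subseteq W_\text{ess}(S)$; the other uses that a point of $\sigma(S)$ above $\sigma_\text{ess}(S)$ is an isolated eigenvalue of finite multiplicity, whose eigenvectors cannot appear in a weakly null sequence, so it contributes nothing to $W_\text{ess}(S)$.) Combining the two steps: $W_\text{ess}(A) \cap L_0 \neq \varnothing$ iff $\max\{\re\lambda : \lambda \in W_\text{ess}(A)\} = \mu(0)$ iff $\max W_\text{ess}(S) = \mu(0)$ iff $\max\sigma_\text{ess}(S) = \mu(0)$, i.e.\ iff $\mu(0) \in \sigma_\text{ess}(S)$ (recalling $\mu(0) = \max\sigma(S) \ge \max\sigma_\text{ess}(S)$ always, so equality is the only way $\mu(0)$ lands in $\sigma_\text{ess}(S)$).

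The main obstacle, such as it is, is being careful that $W_\text{ess}(A)\cap L_0$ is nonempty precisely when the \emph{maximum} of $\re$ over $W_\text{ess}(A)$ hits $\mu(\theta)$ — that is, ruling out that $W_\text{ess}(A)$ could meet $L_\theta$ without its supremum in that direction being attained there. Since $W_\text{ess}(A)$ is compact and contained in $\cl{W(A)}$, and $\cl{W(A)}$ lies in the closed half-plane $\{\re(e^{-i\theta}z) \le \mu(\theta)\}$ by Lemma \ref{lem:support}, any point of $W_\text{ess}(A)$ on $L_\theta$ automatically realizes the maximum, so this is not a real difficulty; compactness also guarantees the maximum is attained. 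The rest is routine bookkeeping with the sequential description of $W_\text{ess}$ and the spectral theorem for the self-adjoint operator $S$.
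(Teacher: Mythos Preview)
Your proposal is correct and follows essentially the same approach as the paper: reduce to $\theta=0$ by rotation, use the sequential characterization of $W_\text{ess}$ to pass between $\inner{Ax_k,x_k}$ and $\inner{\re(A)x_k,x_k}$ (with a subsequence argument for the converse), and invoke the fact that for a self-adjoint operator the essential numerical range is the convex hull of the essential spectrum. The only organizational difference is that you first isolate the general identity $\re(W_\text{ess}(A)) = W_\text{ess}(\re A)$ and then compare maxima, whereas the paper argues each implication directly; the underlying ingredients are identical.
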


\begin{proof}
Since $W(e^{-i\theta}A) = e^{-i \theta} W(A)$ and $W_\text{ess}(e^{-i \theta} A) = e^{-i \theta} W_\text{ess}(A)$, we may assume without loss of generality that $\theta = 0$. Suppose that $z \in W_\text{ess}(A) \cap L_0$.  There is a sequence $x_k \in \SH$ such that $x_k$ converges weakly to zero, and $\inner{Ax_k,x_k} \rightarrow z$.  By Lemma \ref{lem:support}, $\re \inner{Ax_k, x_k} = \inner{\re(A)x_k,x_k}$ converges to $\mu(0)$. Thus $\mu(0) \in W_\text{ess}(\re(A))$.  For a self-adjoint operator, the essential numerical range is the convex hull of the essential spectrum \cite[Corollary 1]{StWi68}, therefore $\mu(0)$ is in the essential spectrum of $\re(A)$.   

Conversely, if $\mu(0) \in \sigma_\text{ess}(\re(A))$, then $\mu(0) \in W_\text{ess}(\re(A))$ so there is a sequence $x_k \in \SH$ such that $x_k \xrightarrow{w} 0$ and $\inner{\re(A)x_k, x_k} \rightarrow \mu(0)$. By passing to a subsequence, we may assume that $\inner{Ax_k,x_k}$ converges to some $z \in L_0$ and since the sequence $x_k$ converges weakly to zero, $z \in W_\text{ess}(A)$. 
\end{proof}

The final crucial ingredient of our proof is the perturbation theory for eigenvalues of an analytic family of self-adjoint operators. First introduced by Rellich \cite{Rellich37} and later refined by Kato \cite{Kato} and Sz.-Nagy \cite{SzNagy46}, this theory was used by Narcowich \cite{Narcowich80} to show that the boundary of the numerical range of a compact operator is a countable union of regular analytic arcs. In fact, although it is not explicitly stated, the results in \cite{Narcowich80} imply that for $A \in \BH$, any closed arc of $\partial W(A)$ whose support lines do not intersect $W_\text{ess}(A)$ is a finite union of regular analytic curves that are contained in $W(A)$ \cite[Proposition 4.2]{LiSp18}. For a readable introduction to analytic perturbation theory and a proof of the following theorem, see \cite[Section 136]{RieszNagy}.
\begin{theorem} \label{thm:kato}
Suppose that $A(t)$ is a self-adjoint operator valued function of a real parameter $t$ given by
$$A(t) = A^{(0)} + A^{(1)}t + A^{(2)}t^2 + \ldots$$ 
where the coefficients $A^{(k)} \in \BH$ are self-adjoint and $\sum_{k\in \N} t^k \|A^{(k)}\|$ converges for all $t$ in a neighborhood of $0$. If $A(0)$ has an isolated eigenvalue $\lambda^{(0)}$ of finite multiplicity $m$, then there exists $\epsilon >0$ such that when $-\epsilon < t < \epsilon$, the spectrum of $A(t)$ in a neighborhood of $\lambda^{(0)}$ consists of $m$ (not necessarily distinct) real values $\lambda_j(t)$ given by 
$$\lambda_j(t) = \lambda^{(0)} + \lambda_j^{(1)}t + \lambda_j^{(2)}t^2 + \ldots.$$
Furthermore there are $m$ corresponding orthogonal $\varphi_j(t) \in \SH$ given by
$$\varphi_j(t) = \varphi_j^{(0)} + \varphi_j^{(1)}t + \varphi_j^{(2)}t^2 + \ldots$$
such that $A(t) \varphi_j(t) = \lambda_j(t) \varphi_j(t)$ for all $j$ and $-\epsilon < t < \epsilon$.  
\end{theorem}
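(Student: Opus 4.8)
The plan is to prove this by the resolvent-and-projection method of Kato and Sz.-Nagy, reducing the infinite-dimensional statement to a finite-dimensional one via an analytic spectral projection. First I would complexify the parameter: since $\sum_k t^k \|A^{(k)}\|$ converges in a neighborhood of $0$, the series $A(t) = \sum_k A^{(k)} t^k$ converges in operator norm for complex $t$ in a disk about $0$, so $A(t)$ is a norm-analytic $\BH$-valued function there, with $A(t)^* = A(\bar t)$ (so $A(t)$ is self-adjoint exactly when $t$ is real). Because $\lambda^{(0)}$ is isolated of finite multiplicity $m$, choose a circle $\Gamma_0 \subset \C$ enclosing $\lambda^{(0)}$ and no other point of $\sigma(A(0))$. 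Since $\Gamma_0$ is compact and disjoint from $\sigma(A(0))$, the norm $\|(A(0)-z)^{-1}\|$ is bounded uniformly for $z \in \Gamma_0$, and writing $A(t)-z = (A(0)-z)\bigl(I + (A(0)-z)^{-1}(A(t)-A(0))\bigr)$ a Neumann series shows that for $|t|$ small the resolvent $(A(t)-z)^{-1}$ exists and is jointly analytic in $(t,z)$ for $z \in \Gamma_0$. Hence the Riesz projection
$$ P(t) = \frac{-1}{2\pi i}\oint_{\Gamma_0}(A(t)-z)^{-1}\,dz $$
is a norm-analytic $\BH$-valued function of $t$ near $0$, and $P(t)$ is an orthogonal projection for real $t$.

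Next I would fix the rank and pass to finite dimensions. Since $P(t) \to P(0)$ in norm and idempotents that are close in norm are similar, $\range P(t)$ has constant dimension $m$ for $|t|$ small; thus the part of $\sigma(A(t))$ inside $\Gamma_0$ consists of exactly $m$ eigenvalues counted with multiplicity. To linearize the moving subspace I would use Kato's transformation function $U(t)$, the analytic solution of $U'(t) = Q(t)U(t)$, $U(0) = I$, with $Q(t) = P'(t)P(t) - P(t)P'(t)$; then $U(t)$ is invertible and analytic, $U(t)P(0)U(t)^{-1} = P(t)$, and $U(t)$ is unitary for real $t$ because $Q(t)$ is then skew-adjoint. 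Restricting $\hat A(t) := U(t)^{-1}A(t)U(t)$, which commutes with $P(0)$, to the fixed $m$-dimensional space $\range P(0)$ yields an $m\times m$ matrix-valued function that is analytic in $t$ and Hermitian for real $t$. The eigenvalues and orthonormal eigenvectors of $A(t)$ inside $\Gamma_0$ are obtained from those of $\hat A(t)$ by applying $U(t)$, so it remains to treat the finite-dimensional family.

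The crux is the finite-dimensional Rellich theorem: a matrix family that is Hermitian for real $t$ and analytic in $t$ has analytic eigenvalues and eigenvectors. The eigenvalues are the roots of $\det(\lambda I - \hat A(t)) = 0$, a monic degree-$m$ polynomial in $\lambda$ with coefficients analytic in $t$, so near $t=0$ each root is given by a Puiseux series $\lambda(s) = \sum_{k\ge 0} c_k s^k$ in $s = t^{1/p}$ for some period $p$. Here is where self-adjointness is decisive. For $t>0$ real one may take $s>0$ real, and since the eigenvalue is real the coefficients $c_k$ are all real; for $t<0$ real the corresponding branch has $s = |t|^{1/p} e^{i\pi/p}$, and requiring $\sum_k c_k |t|^{k/p} e^{ik\pi/p}$ to be real for all small $|t|$ forces, term by term, $c_k \sin(k\pi/p) = 0$, i.e.\ $c_k = 0$ unless $p \mid k$. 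Thus only integer powers of $t$ survive and each eigenvalue is an honest convergent power series $\lambda_j(t) = \lambda^{(0)} + \lambda_j^{(1)}t + \cdots$. With the eigenvalues analytic, analytic orthonormal eigenvectors $\varphi_j(t)$ are produced by solving $(\hat A(t) - \lambda_j(t))\varphi = 0$ with unit normalization away from crossings, and across the isolated crossing points by applying the transformation-function construction once more to the now smaller spectral subspaces; transporting by $U(t)$ gives the claimed orthogonal $\varphi_j \in \SH$.

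I expect the main obstacle to be exactly this last step: the passage from continuity to analyticity of the eigenvalues at points where branches meet. The resolvent-projection reduction is robust and uses self-adjointness only lightly (to get unitarity of $U(t)$), but ruling out fractional-power branching---the phenomenon that genuinely occurs for non-self-adjoint analytic families---is where the Hermitian hypothesis does all the work, via the reality-on-both-sides argument above; carrying the analyticity through to the eigenvectors at crossing points requires the same mechanism applied inductively.
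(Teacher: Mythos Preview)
The paper does not give its own proof of this theorem: it is stated as background and the reader is referred to \cite[Section 136]{RieszNagy} (with Rellich, Kato, and Sz.-Nagy cited as the originators). So there is nothing in the paper to compare your argument against line by line.

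That said, your outline is precisely the standard Kato--Sz.-Nagy proof: analytic Riesz projection $P(t)$ around the isolated eigenvalue, Kato's transformation function $U(t)$ solving $U'=QU$ to trivialize the moving range of $P(t)$, reduction to an $m\times m$ Hermitian analytic family, and then Rellich's theorem. Your Puiseux-series argument for why self-adjointness kills fractional branching (reality of $\sum_k c_k |t|^{k/p}e^{ik\pi/p}$ forcing $c_k=0$ unless $p\mid k$) is correct and is one of the clean ways to see why the Hermitian hypothesis is essential. The eigenvector step at crossings, handled by iterating the projection/transformation-function construction on the smaller spectral packets, is also the standard device. In short, your proposal is correct and is essentially the proof the paper is pointing to in the cited references.
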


\begin{proof}[Proof of Theorem \ref{thm:main}]
If $\cl{W(A)}$ intersects $\Gamma$ at infinitely many points, then there is a sequence of distinct points $z_k \in \cl{W(A)} \cap \Gamma$ that converge to some $z$.  Since both $\cl{W(A)}$ and $\Gamma$ are closed sets, $z \in \cl{W(A)} \cap \Gamma$. The tangent line to $\Gamma$ at $z$ is a support line for the convex set $\cl{D}$, and since $W(A) \subseteq \cl{D}$, it must also be a support line for $\cl{W(A)}$.  We may assume without loss of generality that this line is $L_0$, as defined in Lemma \ref{lem:support}. By assumption, $L_0$ does not intersect $W_\text{ess}(A)$. Therefore the maximum element of the spectrum of $\re(A)$, which is $\re(z)$, is an isolated eigenvalue with finite multiplicity by Lemma \ref{lem:ess}. 

We can now apply Theorem \ref{thm:kato} to the analytic family of self-adjoint operators $\re(e^{-i\theta}A)$. There is a finite collection of real analytic functions $\lambda_j(\theta)$ defined in an $\epsilon$-neighborhood of $\theta = 0$ such that $\lambda_j(0) = \re(z)$ for all $j$, and each $\lambda_j(\theta)$ is an eigenvalue of $\re(e^{-i\theta}A)$ for all $-\epsilon < \theta < \epsilon$.  All other elements of the spectrum of $\re(e^{-i\theta}A)$ are strictly less than the minimum $\lambda_j(\theta)$ when $\theta \in (-\epsilon,\epsilon)$. To each $\lambda_j(\theta)$, there is a corresponding eigenvector $\varphi_j(\theta) \in \SH$, and these eigenvectors are also analytic functions of $\theta$ in the interval $(-\epsilon,\epsilon)$. Each pair $(\lambda_j, \varphi_j)$ has an associated analytic curve $\zeta_j(\theta) := f_A(\varphi_j(\theta))$ contained in $W(A)$. Each of the curves $\zeta_j(\theta)$ is either regular or its range is a single point \cite[Lemma 3.1]{Narcowich80}. 

The boundary of $W(A)$ in the vicinity of $z$ consists of either a single analytic curve corresponding to one of the $\zeta_j(\theta)$, or to two analytic curves which can either be line segments with one endpoint at $z$ or curves that correspond to some of the $\zeta_j(\theta)$ \cite[Theorem 5.1]{Narcowich80}.  Since the sequence $z_k$ approaches $z$ along the boundary of $W(A)$, and the $z_k$ are not contained in a line segment, we conclude that there are infinitely many $z_k$ contained in an arc of $\partial W(A)$ that is parametrized by one of the functions $\zeta_j(\theta)$. This means that the curve given by $\zeta_j(\theta)$ intersects $\Gamma$ at infinitely many points.  

Both $\zeta_j(\theta)$ and $\Gamma$ are regular analytic curves, and both have a vertical tangent line at $z$, so we may parameterize both curves analytically in a neighborhood of $z$ using their imaginary coordinate as the common parameter. Then, since the two analytic curves intersect infinitely many times and the intersection points accumulate at $z$, we conclude that the two curves are identical in a neighborhood of $z$ (possibly one-sided if $z$ is an endpoint of $\Gamma$).  Every point of $\Gamma$ in that neighborhood also lies on the curve $\zeta_j(\theta)$ for some $\theta$, and is therefore contained in $W(A)$. 
We can repeat this argument at the endpoint(s) of the neighborhood where $\zeta_j(\theta)$ and $\Gamma$ coincide to analytically continue the functions $\lambda_j$, $\varphi_j$, and $\zeta_j$ until the range of $\zeta_j(\theta)$ contains all of $\Gamma$. Therefore $\Gamma \subset W(A)$.  
\end{proof}

\begin{remark} \label{rem:gen}
If $A = N + K$ where $N \in \BH$ is normal and $K$ is compact, then $W_\text{ess}(A) = W_\text{ess}(N)$.  The essential numerical range of a normal operator is the convex hull of its essential spectrum, and this essential spectrum does not change when $N$ is replaced by $N + K$ \cite{FiStWi72}. Therefore the essential numerical range of $A$ is the convex hull of the essential spectrum of $A$.  So Theorem \ref{thm:main} implies \cite[Theorems 3 and 4]{BiSpSr18}. 
\end{remark}

\section{Examples}

The following examples illustrate the how the conditions of Theorem \ref{thm:main} and Corollary \ref{cor:inf} can fail.
\begin{example}
It is easy to construct a normal operator $A \in \BH$ such that $W(A)$ is contained in the closed unit circle $\cl{\mathbb{D}}$ and $W(A)$ intersects $\partial \mathbb{D}$ at infinitely many points without $W(A)$ equaling $\cl{\mathbb{D}}$.  For example, take a diagonal operator on $\ell_2(\N)$ with diagonal entries $e^{i/k}$, $k \in \N$.  For such an operator, the accumulation point of $\cl{W(A)} \cap \partial \mathbb{D}$ is contained in the essential numerical range $W_\text{ess}(A)$, and therefore the conditions of of Theorem \ref{thm:main} and Corollary \ref{cor:inf} are not satisfied. 
\end{example}

\begin{example}
A weighted shift operator $S: \ell_2(\N) \rightarrow \ell_2(\N)$ is defined by $Se_k := s_k e_{k+1}$ where $\{e_k\}$ is the elementary basis of $\ell_2(\N)$ and $s_k$ is a bounded sequence of weights.  It is possible to choose the $s_k$ so that the numerical range $W(S)$ is the open unit disk $\mathbb{D}$ \cite[Proposition 6]{Stout83}.  In that case, $\cl{W(S)}$ intersects every point on $\partial \mathbb{D}$ but Corollary \ref{cor:inf} does not apply since every point in $\partial \mathbb{D}$ is an element of the essential numerical range $W_\text{ess}(S)$ by \cite[Theorem 1]{Lancaster75}.  
\end{example}

The requirement that $\Gamma$ not be a line segment in Theorem \ref{thm:main} is essential, as the following two examples show. 
\begin{example}
Suppose that $D$ is the convex hull of $\{0, 2, 2i\}$ and $A$ is the 3-by-3 diagonal matrix with diagonal entries 0, 1, and $i$. Clearly, $W(A) \subset D$. The line segment $[0,2]$ is a regular analytic curve contained in $\partial D$ that intersects $W(A)$ at infinitely many points but is not contained in $W(A)$.  
\end{example}

\begin{example}
Consider the compact diagonal operator $A$ on $\ell_2(\N)$ with diagonal entries, $1$ and $i/k$ for $k \in \N$.  Then $\cl{W(A)}$ is the convex hull of $\{0, 1, i\}$, but the line segment $[0,1)$ is not in $W(A)$. 
\end{example}

\color{black}

\bibliography{perturb}
\bibliographystyle{plain}

\end{document}